\documentclass[a4paper,12pt]{article}

\usepackage[top=3.0cm,bottom=3.0cm,left=2.25cm,right=2.25cm]{geometry}
\usepackage{amsmath,amsthm,mathrsfs,graphicx,amsfonts}
\usepackage{bm}
\usepackage{cases}
\usepackage[english]{babel}
\usepackage{amsmath,amsthm}
\usepackage{amsfonts}
\usepackage{latexsym}
\usepackage{graphicx}
\usepackage{txfonts}
\usepackage[numbers,sort&compress]{natbib}
\usepackage[natural]{xcolor}
\usepackage{rotating}
\usepackage{mathtools}
\usepackage{enumitem}

\newtheorem{lem}{Lemma}[section]
\newtheorem{thm}[lem]{Theorem}
\newtheorem{cor}[lem]{Corollary}

\newtheorem{claim}[lem]{\indent Claim}
\newtheorem{conj}[lem]{Conjecture}

\begin{document}
\title{Degree sequences realizing labelled \(h\)-factors}
\date{}
\author{Zhen Liu\footnote{Email: 1552580575@qq.com}, ~~Qinghou Zeng\footnote{Research supported by National Key R\&D Program of China (Grant No. 2023YFA1010202) and National Natural Science Foundation of China (Grant No. 12371342). Email: zengqh@fzu.edu.cn (Corresponding
author)}\\
{\small Center for Discrete Mathematics, Fuzhou University, Fujian, 350003, China}}

\maketitle

\maketitle
\begin{abstract}
For a positive integer \( k \), let \( [k] = \{1, 2, \ldots, k\} \). Let \( h \) be a non-negative integer, and let \( n \) be a multiple of \( h + 1 \). Define \( H \) as the disjoint union of \( n/(h+1) \) cliques (each of size \( h + 1 \)) with vertex sets \( V_1, \ldots, V_{n/(h+1)} \), where \( V_i = \{ v_j \mid j = (i-1)(h+1) + k, k \in [h+1] \} \) for \( i \in [n/(h+1)] \). A non-increasing integer sequence \( (d_1, \ldots, d_n) \) is \( H \)-realizable if there exists a graph \( G \) with \( V(G) = V(H) = \{ v_i \mid i \in [n] \} \), \( d_G(v_i) = d_i \) for all \( i\in [n] \), and \( G \) contains \( H \) as a spanning subgraph. If \( h = 0 \), then a non-increasing integer sequence \( (d_1, \ldots, d_n) \) is \( H \)-realizable if and only if there exists a graph \( G \) with degree sequence \( (d_1, d_2, \dots, d_n) \); Erd\H{o}s and Gallai established a necessary and sufficient condition for this property. Recently, Briggs, McDonald, and Shan extended their result to the case \( h = 1 \). In this paper, we establish a necessary and sufficient condition for a sequence \( (d_1, d_2, \dots, d_n) \) to be \( H \)-realizable for any non-negative integer \( h \), thereby confirming a conjecture due to Briggs, McDonald and Shan.
\end{abstract}

\section{Introduction}\label{Intro}

All graphs considered in this paper are simple. Let \( n \in \mathbb{N} \) and let \( d_1 \geq d_2 \geq \dots \geq d_n \) be non-negative integers. Say a sequence \( \pi = (d_1, d_2, \dots, d_n) \) is \emph{graphic} if there exists a graph \( G \) with degree sequence \( (d_1, d_2, \dots, d_n) \), and we call such a graph \( G \) a \emph{realization} of \( \pi \). For a graph $H$, a non-increasing non-negative integer sequence \(\pi= (d_1, d_2, \dots, d_n) \) is said to be \( H \)-\emph{realizable} if there exists a graph \( G \) such that \( V(G) = V(H) = \{ v_i \mid i \in [n] \} \), \( d_G(v_i) = d_i \) for all \( i \in [n] \), and \( G \) contains \( H \) as a spanning subgraph.  

For a positive integer \( k \), let \( [k] = \{1, 2, \dots, k\} \). Let \( h \) be a non-negative integer, and let \( n \) be a multiple of \( h + 1 \). Define \( H_h \) as the disjoint union of \( \frac{n}{h+1} \) cliques (each of size \( h + 1 \)) with vertex sets \( V_1, \dots, V_{\frac{n}{h+1}} \), where \( V_i = \{ v_j \mid j = (i-1)(h+1) + k, k \in [h+1] \} \) for \( i \in [\frac{n}{h+1}] \).
If \( h = 0 \), a non-increasing non-negative integer sequence \( \pi=(d_1, d_2, \dots, d_n) \) is \( H_0 \)-realizable if and only if it is graphic. Erd\H{o}s and Gallai \cite{erdos1960} established a classic result stated as follows.
\begin{thm}[Erd\H{o}s and Gallai \cite{erdos1960}]\label{thm:erdos_gallai}
Let \( n \in \mathbb{N} \) and let \( d_1 \geq d_2 \geq \dots \geq d_n \) be non-negative integers. The sequence \( \pi=(d_1, d_2, \dots, d_n) \) is graphic if and only if \( \sum_{i=1}^n d_i \) is even, and for every \( k \in [n] \),
\begin{align}\label{eq:erdos_gallai_ineq}
\sum_{i=1}^k d_i \leq k(k-1) + \sum_{i=k+1}^n \min\{d_i, k\}.
\end{align}
\end{thm}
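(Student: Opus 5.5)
Necessity is a double count. Given a realization \(G\), the degree sum is \(2|E(G)|\), hence even. Fix \(k\) and let \(S=\{v_1,\dots,v_k\}\). Then \(\sum_{i\le k} d_i\) counts edges inside \(S\) twice, and there are at most \(k(k-1)\) such contributions. It also counts edges from \(S\) to each \(v_i\notin S\), and there are at most \(\min\{d_i,k\}\) of these for each such \(v_i\). This gives \eqref{eq:erdos_gallai_ineq}.

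For sufficiency I will use the standard extremal "edge-switching" argument, in the form of Choudum's induction or the Tripathi--Vijay subsequence proof. The plan is an induction on \(\sum d_i\), reducing \(\pi\) to a smaller sequence that still satisfies the hypotheses. Assume \(\pi\) is nonzero, satisfies the parity condition and all \(n\) inequalities, and assume \(d_n\ge 1\) after deleting zero entries; deleting zeros preserves the inequalities, since the right side only loses nonnegative terms. Let \(t\) be the largest index with \(d_t=d_1\). Form \(\pi'\) from \(\pi\) by decreasing \(d_t\) and \(d_n\) by one each, then reordering. If \(\pi'\) is graphic with realization \(G'\) and \(v_tv_n\notin E(G')\), we add that edge and are done. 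If the edge is present, we do a 2-switch. Since \(d'_t\ge d'_n\), some vertex \(w\) is adjacent to \(v_t\) but not to \(v_n\). Its degree in \(G'\) is at least 1, so it has a neighbour \(x\ne v_t\) not adjacent to \(v_t\), or we can rearrange directly. Standard swapping of \(v_tw, ?\) pairs then frees \(v_tv_n\).

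The cleaner route is to avoid the realization-level switch by choosing which vertex to pair with more carefully. Laying off the Havel--Hakimi-type reduction as stated, the parity of \(\pi'\) is immediate. The only real work is the main obstacle: showing \(\pi'\) satisfies \eqref{eq:erdos_gallai_ineq} for every \(k\).

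I will handle this by cases on \(k\) relative to \(t\) and \(n\), after reindexing \(\pi'\) into non-increasing order. Choosing \(t\) as the last copy of the maximum makes this reordering trivial.

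\begin{itemize}
\item For \(k<t\), the left side is unchanged. The right side can drop only through the term \(\min\{d_t-1,k\}\) versus \(\min\{d_t,k\}\), or through \(d_n\). A drop can occur only if \(d_1=d_t\le k\). In that case \(\sum_{i\le k}d_i\le k\cdot k\le k(k-1)+\sum_{i>k}\min\{d_i,k\}-(\text{drop})\) as long as the tail has enough positive terms. This uses \(d_n\ge1\).
\item For \(t\le k<n\), the left side drops by one, which absorbs the possible loss of one at \(d_n\).
\item For \(k=n\), the inequality is trivial.
\end{itemize}

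The delicate subcase is \(k<t\) with \(d_1\le k\). Here I argue that the original inequality for this \(k\) is strict, unless all remaining tail terms saturate. The argument is that if equality held, then \(kd_1=k(k-1)+\sum_{i>k}\min\{d_i,k\}\), and comparing with \(k+1\le t\) (which also has \(d_{k+1}=d_1\)) forces a contradiction. The inequality at \(k+1\) gives \((k+1)d_1\le (k+1)k+\sum_{i>k+1}\min\{d_i,k+1\}\), and the differences combine with \(d_1\le k\) to show slack of at least one. I expect this bookkeeping to be the principal technical step. Everything else follows by induction, with base case the zero sequence.
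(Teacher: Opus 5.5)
Your induction on $\sum d_i$ (lower $d_t$ and $d_n$) takes a different route from the paper's. The paper only cites Theorem~\ref{thm:erdos_gallai}. Its own sufficiency argument, modelled on Tripathi--Venugopalan--West, never reduces the sequence: it grows subrealizations at a critical index $r$ by local edge exchanges and uses the inequality only at $k=r$, once no exchange applies. Your route can work, but the two steps that carry the content are not done, and the case analysis for the main one is wrong.

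Write $L_k=\sum_{i\le k}d_i$ and $R_k$ for the right side of \eqref{eq:erdos_gallai_ineq}. For $k<t$ you have $L'_k=L_k=kd_1$. The right side $R'_k$ loses $1$ from the $i=t$ term if $d_1\le k$, and $1$ from the $i=n$ term if $d_n\le k$.

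\textbf{The missed case.} Your claim that a drop occurs only if $d_1\le k$ is false. The case $d_n\le k<d_1$ is never treated, and there you need the inequality at $k$ to be strict. This is exactly where your comparison with $k+1$ works. Put $A=|\{i>k:d_i>k\}|$ and $b=\sum_{i>k,\,d_i\le k}d_i\ge d_n\ge 1$. Equality at $k$ reads $kd_1=k(k-1+A)+b$. The inequality at $k+1$, using $d_{k+1}=d_1>k$, reads $(k+1)d_1\le (k+1)(k-1+A)+b$. Together they give $(k+1)b/k\le b$, which is absurd.

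\textbf{The subcase you call delicate.} Here $d_1\le k-1$ is trivial. For $d_1=k$ both terms drop, so you need slack $2$, not $1$. The inequality at $k+1$ is vacuous in this case. The slack comes instead from $\sum_{i>k}d_i\ge (t-k)k+(n-t)\ge k+1$ together with parity, since $\sum_{i>k}d_i\equiv k^2\equiv k \pmod 2$. This gives $\sum_{i>k}d_i\ge k+2$.

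\textbf{The lift from $\pi'$ to $\pi$.} Your reduction keeps all $n$ vertices, so lifting a realization $G'$ of $\pi'$ to one of $\pi$ is not automatic. No cleverer choice of pairing avoids this, and the sketch ending in ``$v_tw,?$'' is not an argument. Here is a switch that works. Suppose $v_t\leftrightarrow v_n$ in $G'$.
\begin{itemize}
\item The inequality at $k=1$ gives $d_t\le n-1$, so there is $w\neq v_t$ with $w\not\leftrightarrow v_t$; hence $w\neq v_n$.
\item Since $d_{G'}(w)\ge d_n>d_{G'}(v_n)$ and $v_t\in N(v_n)\setminus N(w)$, there exists $x\in N(w)\setminus N[v_n]$.
\item Replacing $wx$ by $wv_t$ and $xv_n$ gives a realization of $\pi$.
\end{itemize}

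\textbf{The constant case.} If $d_1=\dots=d_n$, your $t$ equals $n$, so one entry is lowered twice and $v_tv_n$ is a loop. Take $t=n-1$ in that case; the order is still preserved and the same estimates go through.

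With these repairs the induction is sound.
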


Recently, Briggs, McDonald, and Shan \cite{6} addressed the problem of characterizing \( H_1 \)-realizable non-increasing non-negative integer sequences and obtained the following result.
\begin{thm}[Briggs, McDonald, and Shan \cite{6}]\label{thm:h1_realizable}
Let \( n \in \mathbb{N} \) and \( d_1 \geq d_2 \geq \dots \geq d_n \ge1\) be integers. The sequence \( \pi=(d_1, d_2, \dots, d_n) \) is \( H_1 \)-realizable if and only if \( \sum_{i=1}^n d_i \) is even, \( n \) is even, and for every \( k \in [n] \),
\[
\sum_{i=1}^k d_i \leq
\begin{cases}
k(k-1) + \sum_{i=k+1}^n \min\{d_i - 1, k\} & \text{if } k \text{ is even}, \\
k(k-1) + \min\{d_{k+1}, k\} + \sum_{i=k+2}^n \min\{d_i - 1, k\} & \text{if } k \text{ is odd}.
\end{cases}
\]
\end{thm}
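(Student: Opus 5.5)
The plan is to follow the classical proofs of Theorem~\ref{thm:erdos_gallai}: necessity by counting edges across a cut, and sufficiency by a switching argument applied to a suitably chosen extremal graph.

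\textbf{Necessity.} Parity of $\sum d_i$ holds in any graph, and $n$ must be even because $H_1$ is a perfect matching on $v_1v_2,v_3v_4,\dots$. For the inequality, let $G$ realize $\pi$ with $H_1\subseteq G$ and set $S=\{v_1,\dots,v_k\}$. Write $\sum_{i\le k}d_i = 2e(S)+e(S,\overline S)$, with $2e(S)\le k(k-1)$. Each $v_i$ with $i>k$ sends at most $\min\{d_i,k\}$ edges to $S$. If its matching partner lies outside $S$, one of its $d_i$ edges is used up, so the bound improves to $\min\{d_i-1,k\}$. When $k$ is even, $S$ is a union of matching pairs, so every $i>k$ gets the bound $\min\{d_i-1,k\}$. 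When $k$ is odd, only $v_{k+1}$ has its partner ($v_k$) inside $S$, so it gets $\min\{d_{k+1},k\}$ and all other $i\ge k+2$ get $\min\{d_i-1,k\}$. This gives exactly the stated inequality.

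\textbf{Sufficiency.} First reduce to an ordinary degree-sequence problem. Define $\pi'$ by $d'_i=d_i-1$, viewed on the vertex set where the pairs $\{v_{2j-1},v_{2j}\}$ are forbidden edges. We then need a realization of $\pi'$ in $K_n$ minus a perfect matching $M$. Adding $M$ gives $G$, since $d_i\ge1$. So the task is a Gale--Ryser/Erd\H{o}s--Gallai type statement for $K_n-M$ with a fixed labelled $M$. I would follow the standard Choudum-style induction or Tripathi--Vijay's constructive proof, adapted to the forbidden matching. Among all graphs $G\supseteq M$ on $\{v_1,\dots,v_n\}$ with $d_G(v_i)\le d_i$ and $\sum_i (d_i-d_G(v_i))$ minimal, take one with a lexicographically maximal deficiency pattern. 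Suppose some vertex is deficient.

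Let $r$ be the smallest index with $d_G(v_r)<d_i$, and let $k$ be the largest index such that $v_1,\dots,v_k$ are pairwise adjacent, perhaps via matching edges. The usual alternating-path switches then show the following: every vertex outside $\{v_1,\dots,v_k\}$ is either saturated, or adjacent to all of $S=\{v_1,\dots,v_k\}$, or has all its non-matching edges going into $S$. These switches must never delete an $M$-edge. Counting then forces equality, and in fact a violation, of the $k$-th inequality. The parity of $k$ enters precisely through whether $v_{k+1}$'s matching edge lies inside the cut, which reproduces the two cases.

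The \textbf{main obstacle} is the switching step in which the natural $2$-switch would need to remove or create a matching edge. I expect the odd-$k$ case to need a separate switch through $v_{k+1}$. That case is where the extra term $\min\{d_{k+1},k\}$, in place of $\min\{d_{k+1}-1,k\}$, has to be exploited, possibly by first showing that we may assume $v_{k+1}$ is adjacent to $v_k$, which lies in $S$. The remaining deficiency parity is handled as in Erd\H{o}s--Gallai: two deficient nonadjacent vertices can be joined, and a single deficient vertex with deficiency $\ge2$ can be fixed by subdividing an edge not in $M$.
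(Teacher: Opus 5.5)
Your necessity argument is correct and is the same cut count the paper uses. The paper deletes the $H$-edges inside $\{v_j : j>k\}$ and applies Erd\H{o}s--Gallai, which amounts to your observation that $v_{k+1}$ is the only outside vertex whose matching edge goes into $\{v_1,\dots,v_k\}$, and only when $k$ is odd.

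The sufficiency half, however, is a plan rather than a proof. The sentence ``the usual alternating-path switches then show \dots'' is exactly the content of the theorem, and you leave open the step you yourself call the main obstacle. Concretely, suppose the first deficient vertex $v_r$ misses a saturated $v_i$ with $i<r$. The Erd\H{o}s--Gallai move takes $u\in N(v_i)\setminus N[v_r]$ and trades $uv_i$ for $uv_r,v_iv_r$. When $d_r-d(v_r)=1$, it instead brings in a second deficient vertex $v_k$ and deletes $v_rv_k$. For $h=1$ the only available $u$ may be the partner of $v_i$, and the only available $v_k$ may be $v_{r+1}$, the partner of $v_r$. Then every such move deletes an $M$-edge.

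Your other two remarks have the same problem:
\begin{itemize}
\item ``Subdividing an edge not in $M$'' presupposes an edge $xy\notin M$ with $x,y\notin N[v]$, which is exactly what is in doubt.
\item Your suggestion for odd $k$ (``assume $v_{k+1}$ is adjacent to $v_k$'') is vacuous, since $v_kv_{k+1}\in M$ always.
\end{itemize}

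Two ingredients are missing.

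First, the final count needs an invariant. The paper keeps $E(G[\{v_{r+1},\dots,v_n\}])=E(H[\{v_{r+1},\dots,v_n\}])$ and counts over the prefix $\{v_1,\dots,v_r\}$, which contains the deficient vertex. Without this invariant, a \emph{saturated} vertex outside your clique may spend its degree on other outside vertices. It then need not send $\min\{d_j-1,k\}$ edges into $S$, so your trichotomy does not yield the count. Also, your $k$ (the largest clique prefix) may be smaller than $r$; then equality in the $k$-th inequality contradicts nothing.

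Second, and more importantly, you must show that $v_r$ can be made adjacent to all of $v_1,\dots,v_{r-1}$ without touching $M$. For $h=1$ this is Case~2 of the paper's proof of Theorem~\ref{thm:h_realizable_main}, namely Claims~\ref{v_k v_r}--\ref{3}. Each of the following claims is justified by an explicit switch avoiding $M$-edges:
\begin{itemize}
\item If $d_r-d(v_r)=1$, one may assume the second deficient vertex is $v_{r+1}$ (so $r$ is odd), with $N(v_{r+1})\subseteq N[v_r]$.
\item $N(v_i)\setminus N(v_r)=\{v_{i'}\}$, where $v_{i'}$ is the partner of $v_i$, and $v_{i'}\not\leftrightarrow v_r$.
\item Neither $v_i$ nor $v_{i'}$ has a neighbour in the pair of $v_r$ with index $>r$.
\item Every neighbour of $v_i$ or $v_{i'}$ outside these two pairs is adjacent to every vertex of the pair of $v_r$ with index $\le r$.
\end{itemize}
After these, a neighbourhood-size comparison rules out the remaining configuration for either parity of $r$; for $r$ even it uses $d(v_i)\ge d(v_{r-1})$. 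None of this appears in your proposal, so sufficiency remains unproved.
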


Building on this work, Briggs, McDonald, and Shan \cite{6} further proposed a conjecture regarding the characterization of \( H \)-realizable sequences for general positive integers \( h \), which is formulated as follows.
\begin{conj}[Briggs, McDonald, and Shan \cite{6}] \label{1111}
Let \( n \in \mathbb{N} \) and \( d_1 \geq d_2 \geq \dots \geq d_n \ge h\) be positive integers. The sequence \( \pi=(d_1, d_2, \dots, d_n) \) is \( H_h \)-realizable if and only if \( \sum_{i=1}^n d_i \) is even, \( n \) is a multiple of \( h+1 \), and for every \( k \in [n] \),
\begin{align}\label{eq:h_realizable_main_ineq}
\sum_{i=1}^k d_i \leq k(k-1) + \sum_{i=k+1}^{k+1+h-s} \min\{d_i - h + s, k\} + \sum_{i=k+1+h-s+1}^n \min\{d_i - h, k\},
\end{align}
where \( s \in \{0, \dots, h\} \) and \( s \equiv k \pmod{h + 1} \).
\end{conj}

In this paper, we confirm Conjecture \ref{1111}, and we mention that our proof is inspired by the proof of Theorem \ref{thm:erdos_gallai} due to Tripathi, Venugopalan and West \cite{west}. We also have the following result as a corollary.
\begin{cor}\label{Corol}
Let \(h,n\in\mathbb{N}\) and let \(d_1\ge d_2\ge\cdots\ge d_n\ge h\) be positive integers. The sequence \( (d_1, d_2, \dots, d_n) \) can realize \(h\) disjoint perfect matchings in the same graph when \(\sum_{i=1}^{n}d_i\) is even, \(n\) is a multiple of \(h+1\), and \eqref{eq:h_realizable_main_ineq} holds for every \(k\in[n]\).
\end{cor}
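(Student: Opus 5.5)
The statement to prove is Corollary \ref{Corol}. I would derive it directly from Conjecture \ref{1111}, which this paper confirms; I take as given the sufficiency direction of the main theorem. Under the hypotheses of Corollary \ref{Corol}, namely \(\sum d_i\) even, \((h+1)\mid n\), \(d_n\ge h\), and \eqref{eq:h_realizable_main_ineq} for every \(k\in[n]\), the sequence is \(H_h\)-realizable. So there is a graph \(G\) on \(\{v_1,\dots,v_n\}\) with \(d_G(v_i)=d_i\) that contains \(H_h\) as a spanning subgraph. The remaining task is to find \(h\) pairwise edge-disjoint perfect matchings inside \(H_h\). These are then \(h\) disjoint perfect matchings of the same graph \(G\) realizing the sequence.

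\begin{itemize}
\item \textbf{Reduce to one clique.} \(H_h\) is a disjoint union of copies of \(K_{h+1}\) on the sets \(V_1,\dots,V_{n/(h+1)}\). It therefore suffices to decompose the edges of a single \(K_{h+1}\) into \(h\) perfect matchings and take the union over all cliques. For each \(j\in[h]\), the \(j\)-th matchings of the individual cliques together form a perfect matching of \(G\). Distinct \(j\) give edge-disjoint matchings.

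\item \textbf{Odd clique size.} If \(h+1\) is even, then \(K_{h+1}\) has a \(1\)-factorization into exactly \(h\) perfect matchings. I would use the standard round-robin construction. Place one vertex \(\infty\) and the vertices \(0,\dots,h-1\) of \(\mathbb{Z}_h\). For \(r\in\mathbb{Z}_h\), let
\[
M_r=\{\infty r\}\cup\{(r-t)(r+t): 1\le t\le (h-1)/2\}.
\]
Each \(M_r\) is a perfect matching. Every pair \(\{a,b\}\subseteq\mathbb{Z}_h\) lies in exactly one \(M_r\), namely the one with \(2r\equiv a+b \pmod h\). This \(r\) is unique because \(h\) is odd. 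Every edge at \(\infty\) also lies in exactly one \(M_r\).

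\item \textbf{Parity issue.} If \(h+1\) is odd, then \(K_{h+1}\) has no perfect matching. The hypotheses give \((h+1)\mid n\) but do not force \(n\) to be even. In that case the statement cannot be read literally, since no perfect matching exists at all. I would handle this by noting that the corollary implicitly requires \(n\) to be even, or equivalently \(h\) odd, as in the case \(h=1\) of Theorem \ref{thm:h1_realizable}.
\end{itemize}

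The main obstacle is this parity point rather than any construction. The realizability itself comes for free from the main theorem, and the factorization for even clique size is classical.
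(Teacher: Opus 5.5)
Your route is the one the paper relies on. The paper gives no separate proof of Corollary \ref{Corol}; it attributes the deduction to Briggs, McDonald and Shan. That deduction is exactly yours: use the sufficiency direction of Theorem \ref{thm:h_realizable_main} to get a realization $G\supseteq H_h$, then $1$-factorize each clique $K_{h+1}$. Your reduction to a single clique is correct. So is your round-robin factorization, since $2r\equiv a+b \pmod h$ has a unique solution because $h$ is odd. The argument is therefore complete whenever $h$ is odd. You are also right that the literal statement fails when $n$ is odd. For example, $h=2$, $n=3$, $\pi=(2,2,2)$ satisfies every hypothesis, being realized by the triangle $H_2$ itself, yet no perfect matching exists.

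The gap is in how you resolve the parity issue. ``$n$ even, or equivalently $h$ odd'' is false. If $h$ is odd then $n$ is even, but the converse fails. Take $h=2$, $n=6$: $H_2$ is two disjoint triangles, which contain no perfect matching, so your construction produces nothing. This case is not vacuous. The sequence $(2,2,2,2,2,2)$ satisfies all the hypotheses, since $H_2$ realizes it. It also admits $C_6$ as a realization containing two disjoint perfect matchings.

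So under the reading you propose ($n$ even), the case of even $h$ with $2(h+1)\mid n$ is claimed but not proved. The required matchings must use edges running between different cliques, and Theorem \ref{thm:h_realizable_main} alone does not supply them. To close this, state the hypothesis as $h$ odd, i.e.\ $h+1$ even. That is precisely what the clique-factorization argument, and hence the deduction the paper cites, delivers. Alternatively, supply a separate argument for even $h$, for instance degree-preserving switches that merge pairs of odd cliques into structures with $h$ disjoint perfect matchings. Such an argument needs genuine work, because existing edges of $G$ between the two cliques can block the switches.
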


In fact, Corollary \ref{Corol} is established by Briggs, McDonald, and Shan \cite{6} under the assumption that Conjecture \ref{1111} holds. 
Brualdi \cite{brualdi} and independently Busch, Ferrera, Hartke, Jacobsen, Kaul, and West \cite{2012west} conjectured that for a degree sequence \( \pi = (d_1, \dots, d_n) \) with an even \( n \), \( \pi \) admits a realization that contains \( h \) pairwise disjoint perfect matchings if and only if \( (d_1 - h, \dots, d_n - h) \) is graphic. While this conjecture has not yet been resolved in its full generality, partial results and related discussions on this topic can be found in the work of Shook \cite{shook}. Corollary \ref{Corol} provides some support for this conjecture.

\medskip
\noindent\textbf{Notation}. Throughout the paper, we use standard graph theory notation. Let $G=(V(G),E(G))$ be a graph. For any \(v\in V(G)\), we denote by \(N_{G}(v)\) the set of \emph{neighbors} of \(v\) in \(G\), by \(N_{G}[v]\) the set \(N_{G}(v)\cup\{v\}\), and by \(d_G(v)\) the \emph{degree} of \(v\) in \(G\). For any $S\subseteq V(G)$, let $G[S]$ denote the induced subgraph of $G$ on $S$, and $N(S)=\bigcup_{v\in S} N(v)$. We will drop the reference to $G$ when there is no danger of confusion.

\section{Proof of Conjecture \ref{1111}}
For convenience, we state Conjecture \ref{1111} as the following theorem.
\begin{thm}\label{thm:h_realizable_main}
Let \( n \in \mathbb{N} \) and \( d_1 \geq d_2 \geq \dots \geq d_n \ge h\) be positive integers.  The sequence \(\pi= (d_1, d_2, \dots, d_n) \) is \( H_h \)-realizable if and only if \( \sum_{i=1}^n d_i \) is even, \( n \) is a multiple of \( h+1 \), and for every \( k \in [n] \),
\begin{align}\label{2025}
\sum_{i=1}^k d_i \leq k(k-1) + \sum_{i=k+1}^{k+1+h-s} \min\{d_i - h + s, k\} + \sum_{i=k+1+h-s+1}^n \min\{d_i - h, k\},
\end{align}
where \( s \in \{0, \dots, h\} \) and \( s \equiv k \pmod{h + 1} \).
\end{thm}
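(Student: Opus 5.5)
Necessity and sufficiency are handled separately; sufficiency follows the constructive approach of Tripathi, Venugopalan and West for Theorem \ref{thm:erdos_gallai}.

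\textbf{Necessity.} Let $G\supseteq H_h$ realize $\pi$. The parity of $\sum d_i$ and the divisibility of $n$ by $h+1$ are immediate. Fix $k$ with $s\equiv k \pmod{h+1}$ and put $S=\{v_1,\dots,v_k\}$. Then $S$ is a union of complete blocks $V_i$ together with the first $s$ vertices of the next block $V_j$. The remaining $h+1-s$ vertices $v_{k+1},\dots,v_{k+1+h-s}$ of $V_j$ are adjacent in $H_h$ to $s$ vertices of $S$ and to $h-s$ vertices outside $S$. Hence each of them sends at most $\min\{d_i-(h-s),k\}$ edges into $S$. Every later vertex $v_i$ has all $h$ of its $H_h$-neighbours outside $S$, so it sends at most $\min\{d_i-h,k\}$ edges into $S$. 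Counting $\sum_{i\le k}d_i$ as twice the edges inside $S$ (at most $k(k-1)$) plus the edges leaving $S$ gives \eqref{2025}.

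\textbf{Sufficiency.} I would build the realization greedily, maintaining a graph $G$ that contains $H_h$ with $d_G(v_i)\le d_i$ for all $i$. Start from $G=H_h$, which is valid because $d_i\ge h$. Define the critical index $r$ as the least index with deficiency $d_i-d_G(v_i)>0$. Take $t$ maximal such that $v_1,\dots,v_t$ induce a clique in $G$ and each of $v_1,\dots,v_{t-1}$ is saturated. Following the Tripathi--Venugopalan--West scheme, each step increases $\sum_i d_G(v_i)$ while keeping $v_1,\dots,v_{r-1}$ saturated and never deleting an edge of $H_h$. The available operations are: (a) add an edge $v_rv_i$ with $i>r$ when $v_i$ has deficiency and the edge is absent; (b) if two vertices $v_i,v_j$ beyond $r$ with deficiency are adjacent to each other but not to $v_r$, delete the edge $v_iv_j$ (provided it is not in $H_h$) and add $v_rv_i$ and $v_rv_j$; (c) switchings through a saturated vertex $v_a$ with $a<r$ that is non-adjacent to $v_r$, moving one of $v_a$'s non-$H_h$ edges to $v_r$ and repairing $v_a$ with an added edge; (d) when $v_r$ has deficiency at least $2$, a switching through a neighbour $v_i$ of $v_r$ with $i>r$ that frees two units.

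I expect the main obstacle to be the terminal case, in which no operation applies while some deficiency remains. There one must show that the first $k=r$ (or $r-1$) vertices form a clique and that every later vertex $v_i$ is already joined to $S$ as much as \eqref{2025} allows. The bound to reach is $\min\{d_i-h,k\}$ for a generic later vertex, and $\min\{d_i-h+s,k\}$ for the $h+1-s$ vertices sharing a block with the tail of $S$; the first $s$ vertices of that block lie in $S$ and are $H_h$-neighbours that cannot be removed. Equality in \eqref{2025}, combined with the leftover deficiency, would then contradict the parity condition or \eqref{2025} itself.

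Making this work requires two things. First, the cutoff $k$ must be chosen so that blocks are respected: I would try $k=r-1$ and $k=r$ and pick whichever matches the block structure. Second, each blocked switching needs a justification that the edge it would delete is forced to lie in $H_h$. A per-block case analysis handles this, depending on whether the vertex involved lies in the block of $v_k$ or in a later block. As in the $h=1$ proof of Theorem \ref{thm:h1_realizable}, I expect most of the effort to go into this case bookkeeping.
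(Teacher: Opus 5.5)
Your necessity argument is correct. It is the same count the paper obtains by deleting the $H_h$-edges inside $\{v_{k+1},\dots,v_n\}$ and applying Theorem~\ref{thm:erdos_gallai}. Sufficiency, however, is only a plan, and it skips exactly the step where $H_h$ breaks the Tripathi--Venugopalan--West argument. Suppose $v_r\not\leftrightarrow v_i$ for a saturated $v_i$ with $i<r$. Then $d(v_i)\ge d_r>d(v_r)$ only gives some $u\in N(v_i)\setminus N[v_r]$, and every such $u$ may be a block-mate of $v_i$, so $uv_i\in E(H_h)$ cannot be moved. Your operation (c) simply presupposes a non-$H_h$ witness, and ``a per-block case analysis handles this'' is not an argument. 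This configuration is the heart of the paper's proof. Take $v_i\in V_p$, $v_r\in V_s$, $A=V_p\setminus N(v_r)$, and let $C$, $D$ be the vertices of $V_s$ of index $\le r$ and $>r$. The paper shows, each time by exhibiting an explicit switch otherwise, that:
\begin{enumerate}
\item $N(v_i)\setminus N(v_r)\subseteq A$ and $N(A)\cap D=\emptyset$;
\item every neighbour of $A$ outside $V_p\cup V_s$ is adjacent to all of $C$;
\item $|C|\ge 3$;
\item $|N(v_\ell)\cap C|\ge |N(v_m)\cap V_p|$ for all $v_\ell\in V_p$, $v_m\in C$.
\end{enumerate}
Double counting $e(V_p,C)$ then forces $|C|=h+1$, $r\equiv 0\pmod{h+1}$ and $d_r-d(v_r)\ge 2$. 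Finally, $\sum_{v\in V_p}d(v)-\sum_{v\in V_s}d(v)\ge 2$ yields a vertex $u\notin V_p\cup V_s$ with more neighbours in $V_p$ than in $V_s$, and this drives a switch trading three edges for four. Nothing in your proposal anticipates this chain, so you have no proof that the process can always continue.

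You also place the difficulty in the wrong spot: with the right invariant the terminal case is routine. The paper keeps $E(G[\{v_{r+1},\dots,v_n\}])=E(H_h[\{v_{r+1},\dots,v_n\}])$ throughout, the analogue of TVW keeping the tail independent. Hence each $v_k$ with $k>r$ has exactly $d(v_k)-(h-s)$ or $d(v_k)-h$ neighbours in $\{v_1,\dots,v_r\}$, where $s\equiv r\pmod{h+1}$. When no case applies, $\sum_{i\le r}d(v_i)$ equals the right side of \eqref{2025} at $k=r$, which forces $d(v_r)=d_r$; no choice between $r$ and $r-1$ is needed. You never state this invariant, and operation (b) even presupposes non-$H_h$ edges among later vertices. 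Without the invariant, a saturated later vertex may have fewer neighbours in $\{v_1,\dots,v_r\}$ than its min-term, so your closing count does not go through. Finally, demanding that every step increase $\sum_i d_G(v_i)$ excludes the degree-preserving switches the method needs. Examples are TVW's $\{v_lv_i,v_mv_j\}\to\{v_iv_j,v_lv_r\}$ for a non-edge $v_iv_j$ with $i<j<r$, and the paper's $v_iu\to v_iv_r$ for $u\in D$. The workable progress measure is lexicographic: lower the deficiency at $v_r$ while keeping the degrees of $v_1,\dots,v_{r-1}$ fixed, and allow later vertices to lose degree.
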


\begin{proof}
To prove the necessity, let \( G \) be a realization of the sequence \( \pi=(d_1, d_2, \dots, d_n) \), where \( H \) is a spanning subgraph of \( G \). Since the sequence \( \pi=(d_1, d_2, \dots, d_n) \) is graphic, it follows that \( \sum_{i=1}^n d_i \) is even. Moreover, because \( H \) is a spanning subgraph of \( G \), it immediately follows that \( n \) is a multiple of \( h+1 \) and \( d_n \geq h \). For any \( k \in [n] \), we consider the graph \( G' \) obtained from \( G \) by deleting all edges in the intersection \( E\left(G\left[\{v_j \mid j > k\}\right]\right) \cap E(H) \).
Let \( i \in \left[\frac{n}{h+1}\right] \) be such that \( s \equiv k \pmod{h + 1} \) and \( v_k \in V_i \). Then, the following degree properties hold for vertices in \( G' \).
\begin{enumerate}
    \item For any vertex \( v \in \{v_j \mid v_j \in V_i \text{ and } j > k\} \), we have \( d_{G'}(v) = d_G(v) - h + s \).
    \item For any vertex \( v \in \bigcup_{j=i+1}^{\frac{n}{h+1}} V_j \), we have \( d_{G'}(v) = d_G(v) - h \).
\end{enumerate}
By applying the  Theorem \ref{thm:erdos_gallai} to the graph \( G' \), we can thereby deduce the inequality \eqref{2025}, completing the proof of necessity.

For sufficiency, let a subrealization of a non-increasing sequence \( (d_1, \dots, d_n) \) be defined as a graph $G$ with vertices \( v_1, \dots, v_n \) such that \( d(v_i) \leq d_i \) for all \( 1 \leq i \leq n \), where \( d(v_i) \) denotes the degree of vertex \( v_i \) in the graph. Given a sequence \( (d_1, \dots, d_n) \) with an even total degree that satisfies \eqref{2025}, we construct a realization of the sequence by successive subrealizations. The initial subrealization in this construction is precisely the graph \( H \).

For a subrealization, the critical index \( r \) is the largest index such that \( d(v_i) = d_i \) for all \( 1 \leq i < r \). While \( r \leq n \), we construct a new subrealization containing \( H \) as a spanning subgraph, with a smaller deficiency \( d_r - d(v_r) \) at \( v_r \), and without altering the degrees of vertices \( v_i \) for \( i < r \) (the degree list is lexicographically increasing). This process ends only when the subrealization is a realization of \( (d_1, d_2, \dots, d_n) \).

Let \( S = \{v_{r+1}, \dots, v_n\} \). We require that \( E(G[S]) = E(H[S]) \) for every subrealization, which certainly holds initially. We write \( v_i \leftrightarrow v_j \) if \( v_i v_j \in E(G) \); otherwise, we write \( v_i \not\leftrightarrow v_j \).

\textbf{Case 1.} \( v_r \not\leftrightarrow v_i \) for some vertex \( v_i \) such that \( d(v_i) < d_i \).
Add the edge \( v_r v_i \).

\textbf{Case 2.} \( v_r \not\leftrightarrow v_i \) for some \( i \) with \( i < r \).

Let \( v_i \in V_p \) and \( v_r \in V_s \). Since \( v_i \not\leftrightarrow v_r \), and given that every subrealization contains \( H \) as a spanning subgraph, we have \( p \neq s \). Because \( i < r \), it follows that \( p < s \). Let \( A = \{ v \mid v \in V_p \text{ and } v \notin N(v_r) \} \) and \( B = \{ v \mid v \in V_p \text{ and } v \in N(v_r) \} \). Let \( C = \{ v_j \mid v_j \in V_s \text{ and } j \leq r \} \) and \( D = \{ v_j \mid v_j \in V_s \text{ and } j > r \} \). Let \( F = N(v_i) \setminus N(v_r) \).

\begin{claim} \label{v_k v_r}
If \( d_r - d(v_r) = 1 \), then there exists some \( k > r \) such that \( d(v_k) < d_k \), \( v_k \in V_s \), and \( N(v_k) \subseteq N[v_r] \).
\end{claim}
\begin{proof}
Since \( \sum_{i=1}^n d_i - \sum_{i=1}^n d(v_i) \) is even, there exists an index \( k > r \) such that \( d(v_k) < d_k \). Case 1 applies unless \( v_r \leftrightarrow v_k \). Let \( v_r \in V_s \) and \( v_k \in V_t \).
If \( s \neq t \), then since \( i < r < k \), we have \( v_i \notin V_t \), and thus \( N(v_i) \cap V_t = \emptyset \). Otherwise, suppose \( v_j \in N(v_i) \cap V_t \). Since \( s \neq t \) and \( k > r \), it follows that \( t > s \), and hence \( j > r \). In this case, \( v_j v_i \notin E(H) \), so we replace the edge \( v_j v_i \) with the edge \( v_i v_r \).
Since \( N(v_i) \cap V_t = \emptyset \), we have \( |N(v_k) \setminus N[v_i]| \geq h \). Moreover, since \( d_i = d(v_i) \geq d_k > d(v_k) \), it follows that \( |N(v_i) \setminus N[v_k]| \geq h + 1 \). Because \( d_H(v_i) = h \), there exists a vertex \( v_{\ell} \in N(v_i) \setminus N[v_k] \) such that \( v_{\ell} v_i \notin E(H) \). Since \( v_r \leftrightarrow v_k \), \( v_{\ell} \) and \( v_r \) are distinct vertices.
To ensure that \( E(G[S]) = E(H[S]) \) for every subrealization, we proceed as follows, replacing \( \{v_{\ell} v_i\} \) with \( \{v_{\ell} v_k, v_i v_r\} \) if \( \ell < r \) and with \( \{v_i v_r\} \) if \( \ell > r \). Thus, \( v_r, v_k \in V_s \). If there exists a vertex \( u_3 \in N(v_k) \setminus N[v_r] \), then we replace the edge \( u_3 v_k \) with the edge \( u_3 v_r \).
\end{proof}

\begin{claim}\label{|A|2}
    \( F \neq \emptyset \), \( F \subseteq A \), and \( |A| \geq 2 \).
\end{claim}
\begin{proof}
    Since \( d(v_i) = d_i \geq d_r > d(v_r) \), we have \( |F| \geq 1 \). Suppose there exists \( v_{\ell} \in F \) such that \( v_{\ell} \notin A \). Then \( v_{\ell} v_i \notin E(H) \). If \( d_r - d(v_r) \geq 2 \), then we replace \( v_{\ell} v_i \) with the edge set \( \{v_{\ell} v_r, v_i v_r\} \). If \( d_r - d(v_r) = 1 \),  then there exists an index \( k > r \) such that \( d(v_k) < d_k \) by Claim \ref{v_k v_r}. If \( v_i \leftrightarrow v_k \), then we replace the edge \( v_i v_k \) with \( v_i v_r \) (since \( k > r > i \) and \( p < s \), \( v_i v_k \notin E(H) \)). If \( v_i \not\leftrightarrow v_k \), then we replace the edge \( v_i v_{\ell} \) with the edge set \( \{v_{\ell} v_r, v_i v_k\} \). This shows that \( F \subseteq A \). Since \( |F| \geq 1 \), \( F \subseteq A \), and \( v_i \in A \) by definition, we have \( |A| \geq 2 \).
\end{proof}

\begin{claim}    \label{A D}
\( N(A) \cap D = \emptyset \).
\end{claim}
\begin{proof}
Otherwise, there exists \( u \in N(A) \cap D \); without loss of generality, suppose \( v_i \in N(u) \cap A \); we replace \( v_i u \) with $v_iv_r$.
\end{proof}
\begin{claim} \label{AC}
\( N(A) \setminus (V_p \cup V_s) \subseteq N(v) \) for any \( v \in C \).
\end{claim}
\begin{proof}
Otherwise, without loss of generality, suppose that \( u \in N(A) \setminus (V_p \cup V_s) \), \( v_s \in A \), \( u \leftrightarrow v_s \), \( u \not\leftrightarrow v_j \), and \( v_j \in C \).
If \( v_j = v_r \) and \( d_r - d(v_r) \geq 2 \), the we replace \( uv_s \) with the edge set \( \{v_s v_r, uv_r\} \).
If \( v_j = v_r \) and \( d_r - d(v_r) = 1 \), then by Claim \ref{v_k v_r}, there exists some \( k > r \) such that \( d(v_k) < d_k \). Since \( N(v_k) \subseteq N[v_r] \), it follows that \( v_k \not\leftrightarrow v_s \). We then replace the edge \( v_s u \) with the edge set \( \{u v_r, v_s v_k\} \).
If \( v_j \neq v_r \) and \( d_r - d(v_r) \geq 2 \), then we have \( d(v_j) = d_j \geq d_r > d(v_r) + 1 \) as \( j < r \). Thus, there exists a vertex \( x \) distinct from \( v_s \) such that \( x \in N(v_j) \setminus N[v_r] \). We then replace the edge set \( \{uv_s, v_j x\} \) with the edge set \( \{uv_j, v_s v_r, x v_r\} \).
If \( v_j \neq v_r \) and \( d_r - d(v_r) = 1 \), then \( d(v_j) = d_j \geq d_r > d(v_r) \) as \( j < r \). Thus, there exists a vertex \( x \) such that \( x \in N(v_j) \setminus N[v_r] \) (possibly \( x = v_s \)). By Claim \ref{v_k v_r}, \( v_s \not\leftrightarrow v_k \). We replace the edge set \( \{uv_s, v_j x\} \) with the edge set \( \{uv_j, x v_r, v_s v_k\} \).
\end{proof}

\begin{claim}\label{c-1}
    For any \( v \in C \), we have \( |N(v) \cap V_p| \leq |C| - 1 \) and \( |N(v_r) \cap V_p| = |B| \leq |C| - 2 \).
\end{claim}
\begin{proof}
For any \( v_j \neq v_r \) with \( v_j \in C \), by Claim \ref{A D}, we have \( |N(v_i) \cap V_s| \leq |C| \). Moreover, since \( v_i \not\leftrightarrow v_r \) and \( v_r \in C \), it follows that \( |N(v_i) \cap V_s| \leq |C|-1 \). By Claim \ref{AC}, we can obtain \( |N(v_i) \setminus (V_p \cup V_s)| \leq |N(v_j) \setminus (V_p \cup V_s)| \). If \( |N(v_j) \cap V_p| \geq |C| \), then
\begin{align*}
d(v_j) &= |N(v_j) \cap V_p| + |N(v_j) \cap V_s| + |N(v_j) \setminus (V_p \cup V_s)| \\
&\geq |C| + h + |N(v_j) \setminus (V_p \cup V_s)|.
\end{align*}
However,
\begin{align*}
d(v_i) &= |N(v_i) \cap V_p| + |N(v_i) \cap V_s| + |N(v_i) \setminus (V_p \cup V_s)| \\
&\leq h + |C| - 1 + |N(v_j) \setminus (V_p \cup V_s)|.
\end{align*}
This implies that \( d(v_i) < d(v_j) \), a contradiction. Thus, we complete the proof that \( |N(v) \cap V_p| \leq |C| - 1 \) for any \( v \in C\setminus\{v_r\} \).

Note that \( |N(v_r) \cap V_p| = |B| \) by the definition of \( B \). By Claim \ref{A D}, we have \( D \subseteq N(v_r) \setminus N(v_i) \). Thus, \( |N(v_r) \setminus N(v_i)| \ge |D| = h + 1 - |C| \). Since \( d(v_i) = d_i \ge d_r > d(v_r) \), it follows that \( |N(v_i) \setminus N(v_r)| = |F| \ge h + 2 - |C| \). By Claim \ref{|A|2}, we have \( F = N(v_i) \setminus N(v_r) \subsetneq A \). Moreover, since \( v_i \notin F \) but \( v_i \in A \), we get \( |A| \ge h + 3 - |C| \). Given that \( |B| = h + 1 - |A| \), we therefore obtain \( |B| \le |C| - 2 \).
\end{proof}
\begin{claim} \label{3}
$|C|\geq 3$.
\end{claim}
\begin{proof}
    Suppose that \( |C| \leq 2 \).
If \( |C| = 1 \), then \( N(v_i) \cap C = \emptyset \). By Claim \ref{A D}, \( N(v_i) \cap D = \emptyset \), so \( N(v_i) \cap V_s = \emptyset \), and thus \( |N(v_r) \setminus N(v_i)| \geq h \). Since \( d_i = d(v_i) \geq d_r > d(v_r) \), we have \( |N(v_i) \setminus N(v_r)| \geq h + 1 \). This implies that there exists \( u \in N(v_i) \setminus N(v_r) \) such that \( u \notin A \), which contradicts Claim \ref{|A|2}.
Suppose that\( |C| = 2 \), and let \( C = \{v_j, v_r\} \). By Claim \ref{c-1}, we have \( |N(v_j) \cap V_p| \leq |C| - 1 = 1 \). Note that by Claim \ref{|A|2}, we have \( |A| \geq 2 \), which implies that there exists a vertex \( v_\ell \in A \) such that \( N(v_\ell) \cap C = \emptyset \). By Claim \ref{A D}, \( N(v_\ell) \cap D = \emptyset \). Thus, \( |N(v_r) \setminus N(v_\ell)| \geq h \). Since \( d_\ell = d(v_\ell) \geq d_r > d(v_r) \), we have \( |N(v_\ell) \setminus N(v_r)| \geq h + 1 \). This implies that there exists \( u \in N(v_\ell) \setminus N(v_r) \) such that \( u \notin V_p \cup V_s \), which contradicts Claim \ref{AC}.
\end{proof}

\begin{claim}\label{88}
   There do not exist vertices \( v_\ell \in V_p \) and \( v_m \in C \) such that \( |N(v_\ell) \cap C| < |N(v_m) \cap V_p| \).
\end{claim}
\begin{proof}
   Suppose that there exist vertices \( v_\ell \in V_p \) and \( v_m \in C \) such that \( |N(v_\ell) \cap C| < |N(v_m) \cap V_p| \).
If \( v_\ell \in A \), by Claim \ref{AC}, we can obtain \( |N(v_\ell) \setminus (V_p \cup V_s)| \leq |N(v_m) \setminus (V_p \cup V_s)| \). By Claim \ref{A D}, \( N(v_\ell) \cap D = \emptyset \). This implies that
\begin{align*}
d(v_\ell) &= |N(v_\ell) \cap V_p| + |N(v_\ell) \cap V_s| + |N(v_\ell) \setminus (V_p \cup V_s)| \\
&= |N(v_\ell) \cap V_p| + |N(v_\ell) \cap C| + |N(v_\ell) \setminus (V_p \cup V_s)| \\
&< |N(v_m) \cap V_p| + |N(v_m) \cap V_s| + |N(v_m) \setminus (V_p \cup V_s)| \\
&= d(v_m),
\end{align*}
which contradicts the fact that \( d(v_\ell) \geq d(v_m) \).
Thus, we have \( v_\ell \in B \), so \( v_\ell \leftrightarrow v_r \).
By Claim \ref{c-1} and \( |N(v_\ell) \cap C| < |N(v_m) \cap V_p| \), we can conclude that
\begin{align}\label{22}
|N(v_\ell) \cap C| \leq |C| - 2.
\end{align}
By \eqref{22} and Claim \ref{3}, there exists \( v_t \in C \) other than \( v_r, v_m \) such that \( v_t \not\leftrightarrow v_\ell \) as \( v_\ell \leftrightarrow v_r \).
Note that
\begin{align*}
d(v_l) &= |N(v_\ell) \cap V_p| + |N(v_\ell) \cap C| + |N(v_\ell) \cap D| + |N(v_\ell) \cap (V(G) \setminus (V_p \cup V_s))|,
\end{align*}
and
\begin{align*}
d(v_m) &= |N(v_m) \cap V_p| + |N(v_m) \cap V_s| + |N(v_m) \cap (V(G) \setminus (V_p \cup V_s))|.
\end{align*}
Since \( |N(v_\ell) \cap C| < |N(v_m) \cap V_p| \), \( |N(v_\ell) \cap V_p| = h \), \( |N(v_m) \cap V_s| = h \), and \( d(v_\ell) = d_\ell \geq d_m \geq d(v_m) \), there either exists a vertex \( u \in N(v_\ell) \setminus N(v_m) \) such that \( u \notin V_p \cup V_s \), or there exists a vertex \( u \in N(v_\ell) \cap D \). If \( d_r - d(v_r) \geq 2 \), it follows that \( d(v_t) = d_t \geq d_r > d(v_r) + 1 \), so there exists \( v \in N(v_t) \setminus N[v_r] \) with \( u \neq v \). If \( d_r - d(v_r) = 1 \), by Claim \ref{v_k v_r}, we have \( v \not\leftrightarrow v_k \) (possibly \( u = v \)).

If \( m = r \) and there exists a vertex \( u \in N(v_\ell) \setminus N(v_m) \) such that \( u \notin V_p \cup V_s \), then we proceed by implementing edge replacements tailored to the value of \( d_r - d(v_r) \); for the case \( d_r - d(v_r) \geq 2 \), we replace the edge set \( \{v_\ell u, v_t v\} \) with the edge set \( \{v_\ell v_t, u v_r, v v_r\} \), and for the case \( d_r - d(v_r) = 1 \), by Claim \ref{v_k v_r}, there exists an index \( k > r \) such that \( d(v_k) < d_k \); if \( v \notin S \), we replace the edge set \( \{v_\ell u, v_t v\} \) with the edge set \( \{v_t v_\ell, u v_r, v v_k\} \), and if \( v \in S \), we replace the edge set \( \{v_\ell u, v_t v\} \) with the edge set \( \{v_t v_\ell, u v_r\} \).
If \( m = r \) and there exists a vertex \( u \in N(v_\ell) \cap D \), then we replace the edge set \( \{v_\ell u, v_t v\} \) with the edge set \( \{v_t v_\ell, v v_r\} \).

If \( m \neq r \) and there exists a vertex \( u \in N(v_\ell) \setminus N(v_m) \) such that \( u \notin V_p \cup V_s \), if \( d_r - d(v_r) \geq 2 \), it follows that \( d(v_m) = d_m \geq d_r > d(v_r) + 1 \), so there exists a vertex \( w \) distinct from \( v \) such that \( w \in N(v_m) \setminus N[v_r] \), and we replace the edge set \( \{v_\ell u, v_t v, v_m w\} \) with the edge set \( \{v_\ell v_t, u v_m, v v_r, w v_r\} \).
If \( d_r - d(v_r) = 1 \) (possibly \( v = w \)), then, by Claim \ref{v_k v_r}, there exists an index \( k > r \) such that \( d(v_k) < d_k \) and \( v \not\leftrightarrow v_k \); if \( v \notin S \), then we replace the edge set \( \{v_\ell u, v_t v, v_m w\} \) with the edge set \( \{v_t v_\ell, u v_m, w v_r, v v_k\} \), and if \( v \in S \), then we replace the edge set \( \{v_\ell u, v_t v, v_m w\} \) with the edge set \( \{v_t v_\ell, u v_m, w v_r\} \).
If \( m \neq r \) and there exists a vertex \( u \in N(v_\ell) \cap D \), then we replace the edge set \( \{v_\ell u, v_t v\} \) with the edge set \( \{v_t v_\ell, v v_r\} \).
\end{proof}

\begin{claim}\label{rh+1}
    \(|C| = h + 1\), \(0 \equiv r \pmod{h+1}\), and \(d_r - d(v_r) \geq 2\).
\end{claim}
\begin{proof}
    If \( |C| \leq h \), then by Claim \ref{88}, for any \( v_\ell \in V_p \) and \( v_m \in C \), \( |N(v_\ell) \cap C| \geq |N(v_m) \cap V_p| \); noting that \( |V_p| = h + 1 \), this implies that unless \( e(V_p, C) = 0 \), we have \( \sum_{v \in V_p} |N(v) \cap C| > \sum_{v \in C} |N(v) \cap V_p| \), a contradiction.
If \( e(V_p, C) = 0 \), then \( N(v_i) \cap C = \emptyset \). By Claim \ref{A D}, \( N(v_i) \cap D = \emptyset \), so \( N(v_i) \cap V_s = \emptyset \), and thus \( |N(v_r) \setminus N(v_i)| \geq h \). Since \( d_i = d(v_i) \geq d_r > d(v_r) \), we have \( |N(v_i) \setminus N(v_r)| \geq h + 1 \). This implies there exists \( u \in N(v_i) \setminus N(v_r) \) such that \( u \notin A \), which contradicts Claim \ref{|A|2}.

By the definition of \( C \), \( |C| = h + 1 \) implies that \( 0 \equiv r \pmod{h+1} \). If \( d_r - d(v_r) = 1 \), by Claim \ref{v_k v_r}, there exists an index \( k > r \) such that \( v_k, v_r \in V_s \); then \( |D| \geq 1 \), which contradicts \( |C| = h + 1 \).
\end{proof}

\begin{claim}\label{9999}
    For any vertex \( v_j \in V_p \), we have \( |N(v_j) \cap V_s| \leq h \).
\end{claim}
\begin{proof}
    If there exists a vertex \( v_j \in V_p \) such that \( |N(v_j) \cap V_s| = h + 1 \), then by Claim \ref{88}, for any \( u \in V_p \) and \( v \in V_s \), \( |N(u) \cap V_s| \geq |N(v) \cap V_p| \). Since \( |N(v_j) \cap V_s| = h + 1 \), and by Claims \ref{c-1} and \ref{rh+1} (which state that for any \( v \in C \), \( |N(v) \cap V_p| \leq |C| - 1 \) and \( C = V_s \)), it follows that \( \sum_{u \in V_p} |N(u) \cap V_s| > \sum_{u \in V_s} |N(u) \cap V_p| \), a contradiction.
\end{proof}

By Claim \ref{rh+1}, we have \( d_r - d(v_r) \geq 2 \). Moreover, since for any \( u \in V_p \) and \( v \in V_s \), \( d(u) \geq d(v) \), it follows that \( \sum_{v \in V_p} d(v) - \sum_{v \in V_s} d(v) \geq 2 \). Thus, there exists a vertex \( u \notin (V_p \cup V_s) \) such that \( |N(u) \cap V_p| > |N(u) \cap V_s| \).
Since \( |N(u) \cap V_p| > |N(u) \cap V_s| \), we conclude that \( u \notin N(A) \) by Claim \ref{AC}. Let \( v_\ell \in V_p \) satisfy \( v_\ell \leftrightarrow u \); then \( v_\ell \in B \). By Claim \ref{9999} and since \( v_\ell \in B \), there exists a vertex \( v_t \in V_s \) distinct from \( v_r \) such that \( v_\ell \not\leftrightarrow v_t \).
Since \( u \notin N(A) \), \( |N(u) \cap V_p| > |N(u) \cap V_s| \), and  \( |A| \geq 2 \) by Claim \ref{|A|2}, there exists a vertex \( v_m \in V_s \) distinct from \( v_r \) and \( v_t \) such that \( v_m \not\leftrightarrow u \). By Claim \ref{rh+1}, we have \( d(v_t) = d_t \geq d_r > d(v_r) + 1 \) and \( d(v_m) = d_m \geq d_r > d(v_r) + 1 \). This implies that there exist two distinct vertices \( v, w \) such that \( v \in N(v_t) \setminus N[v_r] \) and \( w \in N(v_m) \setminus N[v_r] \) (possibly \( u = v \)). We replace the edge set \( \{v_\ell u, v_t v, v_m w\} \) with the edge set \( \{v_\ell v_t, u v_m, v v_r, w v_r\} \).

\textbf{Case 3.} Let \( s \equiv r \pmod{h+1} \), where \( v_1, \ldots, v_{r-1} \in N(v_r) \); and either \( |N(v_k) \cap \{v_1, \ldots, v_r\}| \neq \min\{r, d_k - h + s\} \) for some \( k \) with \( r < k \leq r + 1 + h - s \), or \( |N(v_k) \cap \{v_1, \ldots, v_r\}| \neq \min\{r, d_k - h\} \) for some \( k \) with \( k \geq r + 1 + h - s + 1 \).

For \( k > r \), \( |N(v_k) \cap \{v_1, \ldots, v_r\}| \leq r \) and \( d(v_k) \leq d_k \). Since \( E(G[S]) = E(H[S]) \) for every subrealization, we have the following bounds: \( |N(v_k) \cap \{v_1, \ldots, v_r\}| \leq d_k - h + s \) for indices \( k \) with \( r < k \leq r + 1 + h - s \), and \( |N(v_k) \cap \{v_1, \ldots, v_r\}| \leq d_k - h \) for indices \( k \) with \( k \geq r + 1 + h - s + 1 \). We therefore conclude that either \( |N(v_k) \cap \{v_1, \ldots, v_r\}| < \min\{r, d_k - h + s\} \) for some \( k \) with \( r < k \leq r + 1 + h - s \), or \( |N(v_k) \cap \{v_1, \ldots, v_r\}| < \min\{r, d_k - h\} \) for some \( k \) with \( k \geq r + 1 + h - s + 1 \).

Since \( d(v_k) < r \), Case 1 applies unless \( v_k \leftrightarrow v_r \) and there exists an index \( i < r \) such that \( v_k \not\leftrightarrow v_i \). In view of \( d(v_i) = d_i \geq d_r > d(v_r) \), there exists a vertex \( v_j \in N(v_i) \setminus N[v_r] \). Since \( v_1, \ldots, v_{r-1} \in N(v_r) \), it follows that \( j > r \), and thus \( v_j v_r \notin E(H) \). We replace the edge \( v_j v_i \) with the edge set \( \{v_j v_r, v_i v_k\} \).

\textbf{Case 4.} \( v_1, \ldots, v_{r-1} \in N(v_r), \text{ and } v_i \not \leftrightarrow v_j \text{ for some } i \text{ and } j \text{ with } i < j < r \).

Since \( d(v_i) \geq d(v_j) > d(v_r) \), there exist vertices \( v_l \in N(v_i) \setminus N[v_r] \) and \( v_m \in N(v_j) \setminus N[v_r] \) (possibly \( v_l = v_m \)). Since \( v_1, \ldots, v_{r-1} \in N(v_r) \), it follows that \( l > r \) and \( m > r \), and thus \( v_l v_i \notin E(H) \) and \( v_m v_j \notin E(H) \). We replace the edge set \( \{v_l v_i, v_m v_j\} \) with the edge set \( \{v_i v_j, v_l v_r\} \).

If none of these cases applies, then \( v_1, \ldots, v_r \) are pairwise adjacent, \( |N(v_k) \cap \{v_1, \ldots, v_r\}| = \min\{r, d_k - h + s\} \) for indices \( k \) with \( r < k \leq r + 1 + h - s \), and \( |N(v_k) \cap \{v_1, \ldots, v_r\}| = \min\{r, d_k - h\} \) for indices \( k \) with \( k \geq r + 1 + h - s + 1 \). Furthermore, since \( E(G[S]) = E(H[S]) \) for every subrealization,
\[
\sum_{i=1}^r d(v_i) = r(r-1) + \sum_{i=r+1}^{r+1+h-s} \min\{d_i - h + s, r\} + \sum_{i=r+1+h-s+1}^n \min\{d_i - h, r\},
\]
where \( s \in \{0, \ldots, h\} \) and \( s \equiv k \pmod{h + 1} \). By \eqref{2025}, \( \sum_{i=1}^r d_i \) is bounded by the right side. Hence we have already eliminated the deficiency at vertex $v_r$. Increase $r$ by 1 and continue.
\end{proof}

\end{document}